\newtheorem{lemma}{Lemma}[]
\newtheorem{claim}{Claim}[]
\newtheorem{theorem}{Theorem}[]
\newtheorem{corollary}{Corollary}[]
\newcommand{\diam}{\operatorname{diam}}
\newcommand{\dist}{\operatorname{d}}
\newcommand{\ed}{\operatorname{ed}}
\newcommand{\dis}{\operatorname{dis}}
\title{ Gromov--Hausdorff distances between normed spaces}
\author{I.\,N. Mikhailov}
\date{}
\begin{document}
\maketitle

\begin{abstract}
In the present paper we study the original Gromov--Hausdorff distance between real normed spaces. In the first part of the paper we prove that two finite-dimensional real normed spaces on a finite Gromov--Hausdorff distance are isometric to each other.  We then study the properties of finite point sets in finite-dimensional normed spaces whose cardinalities exceed the equilateral dimension of an ambient space. By means of the obtained results we prove the following enhancement of the aforementioned theorem: every finite-dimensional normed space lies on an infinite Gromov--Hausdorff distance from all other non-isometric normed spaces.

\end{abstract}
\begin{center}
    \section{Introduction}
\end{center}

The Gromov--Hausdorff distance is one of the most beautiful constructions in metric geometry. It allows to compare how similar two arbitrary metric spaces are. The concept was first introduced in \cite{13} by D.\,Edwards. Later it became famous due to M.\,Gromov's paper \cite{14} (see historical details in \cite{15}). The classical Gromov--Hausdorff distance between metric spaces $X$ and $Y$ is defined as the infinum of the Hausdorff distances between the images $X'$ and $Y'$ of the spaces $X$ и $Y$ under all possible isometric embeddings $\phi\colon X\to Z$ и $\psi\colon Y\to Z$ into an arbitrary metric space $Z$.  

In the majority of applications the Gromov--Hausdorff distance is used to study compact metric spaces. In case of unbounded metric spaces the punctured spaces are considered, i.e., the pairs $(X,\,p)$ where $X$ is an arbitrary metric space and $p$ is one of its points. Traditionally, in such situation the topology generated by the Gromov--Hausdorff distance is of more interest than the distance itself. By $B_r(x)$ let us denote a closed ball of radius $r$ centered at the point $x$. Then the sequence of punctured metric spaces $(X_n,\, p_n)$ converges by Gromov--Hausdorff to a punctured metric space  $(Z, \,p)$, iff the following conditions hold (see details in \cite{1}, \cite{16}): for every $r > 0$ and $\varepsilon > 0$ there exists a natural number $n_0$ such that for every $n > n_0$ there exists a mapping $f\colon B_r(p_n)\to X$ with properties
\begin{enumerate} 
\item $f(p_n) = p$;
\item $\dis(f):= \sup\Bigl\{\bigl||xx'|-|f(x)f(x')|\bigr|\colon x,\,x'\in X\Bigr\} < \varepsilon$;
\item $\varepsilon$-neighbourhood of the set $f(B_r(p_n))$ in $X$ contains the ball $B_{r-\varepsilon}(p)$.
\end{enumerate}

In the present paper we study the Gromov--Hausdorff distance in its original sense. Restrictions of the Gromov--Hausdorff distance to various classes of metric spaces possess fascinating and often unexpected geometric properties. For example, let us consider the class (in the sense of NBG set theory) $\mathcal{GH}$ of representatives of all isometry classes of metric spaces. The Gromov--Hausdorff distance defines a generalised pseudometric on this class, i.e., is a symmetric, non-zero function, that satisfies the triangle inequality. This space is called \emph{the Gromov-Hausdorff class}. The equivalence classes in $\mathcal{GH}$ under the relation: $X\sim Y$ iff $\dist_{GH}(X,\,Y)<\infty$ --- are called \emph{clouds}. In the monography \cite{17} M. Gromov announced that each cloud in $\mathcal{GH}$ is contractible and gave an example of the cloud corresponding to $\mathbb{R}^n$. However, it turned out later that there exist clouds which are not even invariant under the multiplication of all their spaces by some positive number $\lambda$ (see details in paper \cite{18}).  

In the paper we are studying the Gromov--Hausdorff distance between real normed spaces. Unlike in the paper \cite{19}, we consider the global Gromov--Hausdorff distance rather than the Gromov--Hausdorff distance between the closed unit balls of normed spaces. For an arbitrary $\varepsilon > 0$ let us call a linear mapping $T$ between normed spaces $E$ and $F$ an $\varepsilon$-isometry, if for all $x,\,y\in E$ an inequality holds $\Bigl|\|x-y\|-\|T(x)-T(y)\|\Bigr|\leqslant\varepsilon$. In the paper \cite{20} the following theorem is proved: if $E$ and $F$ are finite-dimensional normed spaces and $T\colon E\to F$ is a surjective $\varepsilon$-isometry for some $\varepsilon$, then there exists an isometry $I\colon E\to F$ such that $\|T(x)-I(x)\|\leqslant 5\varepsilon$ for all $x\in E$. It follows from this result that any two finite-dimensional normed spaces on a finite Gromov--Hausdorff distance are isometric. In section $3$ we give a simpler proof of this theorem, based on Theorem \ref{theorem: base-lemma}, which provides a natural sufficient condition for isometric embeddability of a given bounded metric space into a given finite-dimensional normed space.  

\emph{Equilateral dimension} of a metric space is the largest cardinality of such its subset that all the distances between different points of this subset are pairwise equal to each other. In the paper \cite{12} it is proved that the equilateral dimension of an arbitrary $n$-dimensional normed space does not exceed $2^n$. In section $4$ we study some properties of finite sets of points in normed spaces whose cardinalities exceed the equilateral dimension of the ambient space. Based on the obtained results, we prove a new lower estimate on the Gromov--Hausdorff distance between a finite-dimensional normed space and an arbitrary metric space of a larger equilateral dimension (see Theorem \ref{theorem: second_ineq}). From this estimate we obtain the following enhancement of the main theorem of section 3: every finite-dimensional normed space lies on an infinite Gromov--Hausdorff distance from all other non-isometric normed spaces.

The author expresses deep gratitude to his scientific advisor professor A.\,A.\,Tuzhilin and also professor
A.\,O.\,Ivanov for posing the problem and constant attention to the work.

\begin{center}
\section{Main definitions and preliminary results}
\end{center}

We start with the introduction of some basic notation. Let $X$ be an arbitrary metric space. We denote the distance between the points $x$ and $y$ of $X$ by $|xy|$. Let $U_r(a) =\{x\in X\colon |ax|<r\}$, $B_r(a) = \{x\in X\colon |ax|\leqslant r\}$ be an open and a closed ball of radius $r$ centered at the point $a$ correspondingly. For an arbitrary subset $A\subset X$ we define $U_r(A) = \cup_{a\in A} U_r(a)$ --- an open $r$-neighbourhood of $A$. For non-empty subsets $A\subset X$ and $B\subset X$ by $\dist(A,\,B)$ we denote a simple distance between these subsets, namely, $\dist(A,\,B) = \inf\bigl\{|ab|\colon\,a\in A,\,b\in B\bigl\}$. 

{\bf Definition 1.} Let $A$ and $B$ be non-empty subsets of a metric space $X$. \textit{The Hausdorff distance} between $A$ and $B$ is the value $$\dist_H(A,\,B) = \inf\{r > 0\colon A\subset U_r(B),\,B\subset U_r(A)\}.$$

{\bf Definition 2.} Let $X$ and $Y$ be metric spaces. If $X',\,Y'$ are subsets of a metric space $Z$ such that $X'$ is isometric to $X$ and $Y'$ is isometric to $Y$, then we call the triple $(X',\,Y',\,Z)$ \textit{a metric realization of a pair $(X,\,Y)$}.

{\bf Definition 3.} \textit{The Gromov--Hausdorff distance} $\dist_{GH}(X,\,Y)$ between two metric spaces $X$, $Y$ is the infinum of positive numbers $r$ such that there exists a metric realization $(X',\,Y',\,Z)$ of a pair $(X,\,Y)$ with $\dist_H(X',\,Y') \leqslant r$.

Let now $X,\,Y$ be non-empty sets. 

{\bf Definition 4.} Any subset $\sigma\subset X\times Y$ is called a \textit{relation} between $X$ and $Y$.

Denote the set of all non-empty relations between $X$ and $Y$ by $\mathcal{P}_0(X,\,Y)$.

Define $$\pi_X\colon X\times Y\rightarrow X,\;\pi_X(x,\,y) = x,$$ $$\pi_Y\colon X\times Y\rightarrow Y,\;\pi_Y(x,\,y) = y.$$ 

{\bf Definition 5.} Relation $R\subset X\times Y$ is called a \textit{correspondence}, if $\pi_X|_R$ and $\pi_Y|_R$ are surjective.

Denote the set of all correspondences between $X$ and $Y$ by $\mathcal{R}(X,\,Y)$.

{\bf Definition 6.} Let $X,\,Y$ be arbitrary metric spaces. Then for every $\sigma \in \mathcal{P}_0(X,\,Y)$ \textit{the distortion} of $\sigma$ is defined as $$\dis \sigma = \sup\Bigl\{\bigl||xx'|-|yy'|\bigr|\colon(x,\,y),\,(x',\,y')\in\sigma\Bigr\}.$$

\begin{claim}[\cite{1}] \label{claim: distGHformula}
For arbitrary metric spaces $X$ and $Y$ the following equality holds $$2\dist_{GH}(X,\,Y) = \inf\bigl\{\dis\,R\colon R\in\mathcal{R}(X,\,Y)\bigr\}.$$
\end{claim}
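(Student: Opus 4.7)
The plan is to prove two matching inequalities between $2\dist_{GH}(X,Y)$ and $\inf\{\dis R : R \in \mathcal{R}(X,Y)\}$.

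For the inequality $2\dist_{GH}(X,Y) \leqslant \inf\{\dis R\}$, I would take any correspondence $R$ with $\dis R < 2r$ and cook up a metric realization of $(X,Y)$ whose Hausdorff distance is at most $r$. The standard device is to work on the disjoint union $Z = X\sqcup Y$, keep the original metrics on $X$ and on $Y$, and for $x\in X,\ y\in Y$ declare
\[
|xy|_Z = \inf\bigl\{|xx'|_X + r + |y'y|_Y : (x',y')\in R\bigr\}.
\]
The routine checks are: symmetry and positivity; the triangle inequality in its three mixed forms (where the hypothesis $\dis R < 2r$ is used exactly to control the case $x,y,x'\in X$ passing through a common point of $Y$, and symmetrically); and that the induced metric on each factor agrees with the original one (the nontrivial direction uses $\dis R < 2r$ to bound $|xx''|_X$ from below via any intermediate pair in $R$). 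Surjectivity of $\pi_X|_R$ and $\pi_Y|_R$ then guarantees that every point of $X$ is within distance $r$ of $Y$ in $Z$, and vice versa, so $\dist_H(X,Y) \leqslant r$ and hence $2\dist_{GH}(X,Y) \leqslant \dis R$.

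For the reverse inequality, I would start from an arbitrary metric realization $(X',Y',Z)$ with $\dist_H(X',Y') < r$ and build a correspondence of distortion at most $2r$. Identify $X$ with $X'$ and $Y$ with $Y'$ and set
\[
R = \bigl\{(x,y)\in X\times Y : |xy|_Z < r\bigr\}.
\]
The Hausdorff condition yields surjectivity of both projections, so $R\in\mathcal{R}(X,Y)$. For $(x,y),(x',y')\in R$, two applications of the triangle inequality in $Z$ give $\bigl||xx'|-|yy'|\bigr| \leqslant |xy|_Z + |x'y'|_Z < 2r$, whence $\dis R \leqslant 2r$. Taking infima over all such realizations produces $\inf\{\dis R\} \leqslant 2\dist_{GH}(X,Y)$.

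I expect the only real obstacle to be bookkeeping in the first direction: verifying that the formula for $|xy|_Z$ really is a metric (not just a pseudometric) and that restricting it to $X$ returns $|\cdot|_X$ exactly. Both points come down to applying $\dis R < 2r$ with the right pair of correspondence elements, and once this is done the rest of the argument is essentially definitional.
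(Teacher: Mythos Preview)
The paper does not supply a proof of this claim at all: it is quoted as a preliminary fact from \cite{1} (Burago--Burago--Ivanov) and used as a black box throughout. There is therefore no ``paper's own proof'' to compare against.

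That said, your plan is exactly the standard argument one finds in \cite{1} and is correct. Both directions are handled in the usual way: from a correspondence $R$ with $\dis R<2r$ you put the gluing metric on $X\sqcup Y$ with cross-distances $\inf\{|xx'|+r+|y'y|:(x',y')\in R\}$, and from a realization with $\dist_H<r$ you take $R=\{(x,y):|xy|_Z<r\}$. Your identification of the only nontrivial verification --- that the glued metric restricts to the given metrics on $X$ and $Y$, which is precisely where $\dis R<2r$ enters --- is accurate, and the positivity issue you flag is harmless since every cross-term in the infimum is at least $r>0$. One small point worth making explicit when you write it up: to obtain the non-strict inequality $2\dist_{GH}(X,Y)\leqslant\dis R$ from the construction with a strict hypothesis $\dis R<2r$, you should let $r\downarrow\dis R/2$ (equivalently, run the argument with $r=\tfrac12\dis R+\varepsilon$ and send $\varepsilon\to0$).
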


{\bf Definition 7.} Let $X$ be an arbitrary metric space. By $\mathcal{H}(X)$ we denote the set of all non-empty closed bounded subsets in $X$. The Hausdorff distance defines a finite metric on $\mathcal{H}(X)$ (see, for example, \cite{1}). The resulting metric space is called a \textit{hyperspace} of the space $X$.

\begin{theorem}[\cite{1}]\label{theorem: theorem1}
Let $X$ be an arbitrary metric space. Then $X$ is compact iff $\mathcal{H}(X)$ is compact.
\end{theorem}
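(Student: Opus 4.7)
Since a metric space is compact if and only if it is complete and totally bounded, the natural strategy for the direction ``$X$ compact $\Rightarrow$ $\mathcal{H}(X)$ compact'' is to verify these two properties separately for the hyperspace. For the reverse implication the cleanest route is to exhibit $X$ itself as a closed metric subspace of $\mathcal{H}(X)$, from which compactness transfers automatically.

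\emph{The easy direction.} I would consider the map $\iota\colon X\to\mathcal{H}(X)$, $x\mapsto\{x\}$, which is an isometric embedding since $\dist_H(\{x\},\{y\})=|xy|$. Its image is closed: if $\{x_n\}\to A$ in Hausdorff distance, then $\diam A\leqslant 2\dist_H(\{x_n\},A)\to 0$, so $A$ must be a singleton. Hence $X$ is isometric to a closed subset of the compact space $\mathcal{H}(X)$ and is therefore itself compact.

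\emph{The hard direction.} For total boundedness, fix $\varepsilon>0$ and pick a finite $\varepsilon$-net $N=\{x_1,\dots,x_k\}$ in the compact space $X$. For any $A\in\mathcal{H}(X)$ set $S(A)=\{x_i\in N\colon\dist(x_i,A)<\varepsilon\}$. Every $a\in A$ lies within $\varepsilon$ of some $x_i$, forcing $x_i\in S(A)$, so $S(A)\ne\varnothing$ and $A\subset U_\varepsilon(S(A))$; the reverse inclusion $S(A)\subset U_\varepsilon(A)$ is immediate from the definition of $S(A)$. Thus the finite family of all non-empty subsets of $N$ forms an $\varepsilon$-net in $\mathcal{H}(X)$. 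For completeness, given a Cauchy sequence $(A_n)$ in $\mathcal{H}(X)$, I would set
\[
A \;=\; \bigcap_{n\geqslant 1}\overline{\bigcup_{m\geqslant n}A_m}
\]
and verify that $A\in\mathcal{H}(X)$ and $\dist_H(A_n,A)\to 0$.

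\emph{Main obstacle.} The convergence $\dist_H(A_n,A)\to 0$ is the most delicate step. The inclusion $A\subset U_\varepsilon(A_n)$ for large $n$ follows directly from the construction of $A$; the opposite inclusion $A_n\subset U_\varepsilon(A)$ requires, for each $a\in A_n$, producing a companion point $a'\in A$ close to $a$. I would construct $a'$ as the limit of a sequence $a_{n_0}=a,\;a_{n_1}\in A_{n_1},\;a_{n_2}\in A_{n_2},\dots$ along a rapidly increasing index sequence $n_0<n_1<\cdots$ chosen so that the Cauchy property forces the consecutive jumps $|a_{n_j}a_{n_{j+1}}|$ to be geometrically summable; completeness of $X$ (a consequence of compactness) then yields the limit, which by construction lies in $A$.
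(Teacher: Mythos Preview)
The paper does not prove this statement at all: it is quoted as a preliminary result with a citation to \cite{1} (Burago--Burago--Ivanov, \emph{A course in metric geometry}), and no argument is given in the paper itself. So there is nothing in the paper to compare your proposal against.

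For what it is worth, your sketch is the standard one and is correct. The singleton embedding handles the reverse implication cleanly; the forward implication via ``complete $+$ totally bounded'' with the $2^{|N|}$ subsets of a finite $\varepsilon$-net as a net in $\mathcal{H}(X)$, and the $\limsup$-type set $A=\bigcap_n\overline{\bigcup_{m\geqslant n}A_m}$ as the limit of a Cauchy sequence, is exactly the textbook argument (and indeed is essentially how \cite{1} proceeds). Your identification of the delicate step --- producing for each $a\in A_n$ a nearby point of $A$ via a telescoping Cauchy sequence through later $A_{n_j}$ --- is accurate, and the compactness of $X$ supplies the completeness needed for that limit to exist.
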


\begin{theorem}[\cite{1}]\label{theorem: theorem2}
If $X$ is a compact metric space and $Y$ is a complete metric space such that $\dist_{GH}(X,\,Y) = 0$, then $X$ is isometric to $Y$.
\end{theorem}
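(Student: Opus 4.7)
My approach has two stages: first, upgrade the completeness of $Y$ to compactness; second, construct the isometry as a subsequential pointwise limit of maps with vanishing distortion supplied by Claim \ref{claim: distGHformula}. The main obstacle is the first stage, since only completeness of $Y$ is assumed, whereas the subsequent extraction of a limit needs sequential compactness; once this hurdle is cleared, the remainder is a fairly standard application of the distortion calculus.

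\textbf{Compactness of $Y$.} Given $\varepsilon>0$, Claim \ref{claim: distGHformula} provides a correspondence $R\in\mathcal{R}(X,Y)$ with $\dis R<\varepsilon$. Covering the compact space $X$ by finitely many balls $U_\varepsilon(x_i)$, $i=1,\dots,k$, and choosing $y_i\in Y$ with $(x_i,y_i)\in R$, for any $y\in Y$ the surjectivity of $\pi_X|_R$ yields $x\in X$ with $(x,y)\in R$; taking $i$ with $|xx_i|<\varepsilon$, the distortion bound gives $|yy_i|\leqslant\dis R+|xx_i|<2\varepsilon$. Thus the $y_i$ form a $2\varepsilon$-net in $Y$, so $Y$ is totally bounded and, being complete, compact.

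\textbf{Producing the isometry.} For each $n$, choose $R_n\in\mathcal{R}(X,Y)$ with $\dis R_n<1/n$ together with a map $f_n\colon X\to Y$ whose graph lies in $R_n$. Fixing a countable dense $S=\{s_k\}\subset X$ and using sequential compactness of $Y$, a Cantor diagonal extraction yields a subsequence along which $f_n(s_k)\to f(s_k)\in Y$ for each $k$. The distortion estimate $\bigl||s_is_j|-|f_n(s_i)f_n(s_j)|\bigr|<1/n$ passes to the limit, so $f\colon S\to Y$ is isometric; density of $S$ together with completeness of $Y$ extends $f$ uniquely to an isometric embedding $\tilde f\colon X\to Y$.

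\textbf{Surjectivity.} To show $\tilde f(X)=Y$, I would first upgrade pointwise convergence $f_n\to\tilde f$ on $S$ to uniform convergence on $X$, via a finite $S$-net of the compact $X$ and the triangle-inequality bound $|f_n(x)\tilde f(x)|\leqslant 2|xs|+1/n+|f_n(s)\tilde f(s)|$ for any $s\in S$. Then, for any $y\in Y$, surjectivity of $\pi_Y|_{R_n}$ produces $x_n'\in X$ with $|yf_n(x_n')|\leqslant\dis R_n<1/n$, and uniform convergence gives $|y\tilde f(x_n')|\to 0$, placing $y$ in $\tilde f(X)$, which is closed as the continuous image of a compact set. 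Hence $\tilde f$ is a surjective isometry, and $X$ is isometric to $Y$.
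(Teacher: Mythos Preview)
The paper does not prove this statement; it is quoted from \cite{1} (Burago--Burago--Ivanov) as a preliminary result, so there is no in-paper argument to compare against. Your proof is the standard one and is correct in substance.

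One small slip: in the compactness step you write ``the surjectivity of $\pi_X|_R$ yields $x\in X$ with $(x,y)\in R$,'' but it is the surjectivity of $\pi_Y|_R$ that guarantees, for a given $y\in Y$, the existence of such an $x$. Apart from this misprint, the argument---transporting a finite $\varepsilon$-net through a small-distortion correspondence to get total boundedness of $Y$, Cantor diagonalisation on a countable dense subset of $X$, extension by completeness, and surjectivity via the uniform estimate $|f_n(x)\tilde f(x)|\leqslant 2|xs|+1/n+|f_n(s)\tilde f(s)|$---is sound.
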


{\bf Definition 8.} The subset of an arbitrary metric space $X$ is called \textit{equilateral} if all the distances between its distinct points are pairwise equal to each other. The \textit{equilateral dimension} of a metric space $X$ is the largest cardinality of its equilateral subset. We denote the equilateral dimension of $X$ by $\ed(X)$. 

\begin{theorem}[\cite{12}]\label{theorem: Soltan}
Let $V$ be an $n$-dimensional normed space. Then $\ed(V)\leqslant 2^n$.
\end{theorem}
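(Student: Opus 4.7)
The plan is a standard volume-packing argument, followed by a sharpening step. Identify $V$ with $\mathbb{R}^n$ as vector spaces and let $\mu$ be the induced Lebesgue measure on $V$; let $B$ denote the closed unit ball of $V$, so that $\mu(tB)=t^{n}\mu(B)$ for every $t>0$.

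Suppose $S=\{x_1,\ldots,x_N\}\subset V$ is equilateral; after rescaling, assume the common pairwise distance equals $1$. First I would check that the open balls $U_{1/2}(x_i)$ have pairwise disjoint interiors: any common point $z\in U_{1/2}(x_i)\cap U_{1/2}(x_j)$ with $i\ne j$ would give $\|x_i-x_j\|\leq\|x_i-z\|+\|z-x_j\|<1$, contradicting equilaterality. Next, since $\conv(S)$ has diameter $1$, it sits inside the translate $x_1+B$, and therefore
\[
\bigcup_i U_{1/2}(x_i)\;\subseteq\;\conv(S)+\tfrac{1}{2}B\;\subseteq\;x_1+\tfrac{3}{2}B.
\]
Comparing $\mu$-measures via the pairwise disjointness gives $N\cdot(1/2)^{n}\mu(B)\leq (3/2)^{n}\mu(B)$, that is, $N\leq 3^{n}$.

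The main obstacle is tightening this crude estimate from $3^{n}$ down to the claimed $2^{n}$. The argument above wastes volume because the inclusion $\conv(S)\subseteq x_1+B$ ignores the fact that \emph{every} pair among the $x_i$ achieves distance exactly $1$ --- the diameter-$1$ convex body $\conv(S)$ is much more constrained than an arbitrary diameter-$1$ subset of a translate of $B$. The natural route is to seek an inclusion of the form $\conv(S)+\tfrac{1}{2}B\subseteq p+B$ for some $p\in V$, which would plug directly into the packing inequality to yield $N\leq 2^{n}$. Establishing such an inclusion (or a suitable measure-theoretic substitute) exploiting the central symmetry of $B$ together with the full equilaterality condition is the nontrivial convex-geometric work that remains; this is the step I would not be able to carry out in one line, and it is presumably the content of the argument in \cite{12}.
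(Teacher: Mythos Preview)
The paper does not prove this theorem at all: it is quoted as a known result from \cite{12} in the preliminaries and is used later as a black box. So there is no ``paper's own proof'' to compare your attempt against.

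That said, your proposal is by your own admission incomplete, and the route you sketch for the sharpening step is not just hard but actually false. The inclusion $\conv(S)+\tfrac{1}{2}B\subseteq p+B$ you hope for would require $\conv(S)$ to be covered by a ball of radius~$\tfrac{1}{2}$, and this already fails in the Euclidean plane: the vertices of a regular triangle with side~$1$ form an equilateral set whose convex hull has circumradius $1/\sqrt{3}>1/2$. So no amount of ``exploiting central symmetry of $B$'' will rescue that particular inclusion, and the packing of half-balls inside a single ball cannot by itself reach $2^{n}$.

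The argument that does work (Petty, and Soltan in \cite{12}) replaces $B$ by $P:=\conv(S)$ as the body being packed. Since every pair $x_i,x_j$ realises $\diam P=1$, a norming functional for $x_i-x_j$ gives parallel supporting hyperplanes of $P$ through $x_i$ and $x_j$; thus the $x_i$ are pairwise \emph{antipodal} in $P$. Antipodality forces the interiors of the translates $\tfrac{1}{2}(P+x_i)$ to be pairwise disjoint, while convexity gives $\tfrac{1}{2}(P+x_i)\subseteq P$ for each $i$. Comparing volumes now yields $N\cdot(1/2)^{n}\mu(P)\le\mu(P)$, i.e.\ $N\le 2^{n}$. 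The essential idea you are missing is to pack copies of $\tfrac{1}{2}P$ rather than of $\tfrac{1}{2}B$; the container then becomes $P$ itself and the factor $3^{n}$ never appears.
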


\begin{theorem}[\cite{11}]\label{theorem: infEd}
Let $X$ be an arbitrary infinite-dimensional normed space. Then for an arbitrary $m\in\mathbb{N}$ there exists a finite equilateral subset in $X$ of $m$ points.
\end{theorem}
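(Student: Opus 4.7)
Fix $m \in \mathbb{N}$. The strategy is to reduce to a classical finite-dimensional result. Since $X$ is infinite-dimensional, I may choose $m-1$ linearly independent vectors in $X$ and let $Y$ denote their linear span: an $(m-1)$-dimensional normed subspace of $X$ under the restricted norm. It then suffices to exhibit $m$ equilateral points inside $Y$, since these are automatically equilateral in $X$.

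The required statement on $Y$ is precisely Petty's theorem (1971): every $n$-dimensional normed space admits an equilateral subset of cardinality $n+1$. Applied to $Y$ with $n = m-1$, it delivers the desired configuration. Petty's theorem itself I would prove by induction on $n$. Given $k$ equilateral points $p_1,\ldots,p_k$ at common distance $d$ in a normed space $V$ with $\dim V \geqslant k$, one looks for a new point $q \in V$ equidistant from all $p_i$, allowing the common distance to change. The idea is to parametrize candidate points $q$ by directions on a sphere in $V$ and consider the continuous map assigning to each direction the vector of discrepancies among the distances $\|q-p_i\|$; a Brouwer- or Borsuk--Ulam-type fixed point argument then produces a direction along which all $k$ distances to the $p_i$ simultaneously coincide. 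After rescaling, this yields $k+1$ equilateral points, completing the induction.

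The main obstacle lies in the topological fixed-point step within Petty's theorem: one must carefully construct the parametrization of candidate configurations, verify the continuity of the discrepancy map, and confirm the hypotheses ensuring that Brouwer's (or Borsuk--Ulam's) theorem applies and detects an exact equilateral configuration rather than only an approximate one. By contrast, the reduction from the infinite-dimensional setting to the finite-dimensional Petty theorem is entirely routine, using nothing more than the existence of finite-dimensional subspaces of arbitrarily large dimension in $X$. Since Theorem~\ref{theorem: infEd} here is cited from \cite{11}, the full force of this fixed-point machinery can be invoked from that reference rather than redeveloped in the present paper.
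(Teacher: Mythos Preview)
The paper does not prove Theorem~\ref{theorem: infEd}; it merely quotes the result from \cite{11} (Brass), so there is no in-paper proof to compare against. Your proposal must therefore be judged on its own merits.

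There is a genuine gap. What you call ``Petty's theorem (1971)'' --- that every $n$-dimensional normed space contains $n+1$ equidistant points --- is not a theorem but a well-known open conjecture. Petty established only the case $n=2$; Brass, in the very reference \cite{11} being cited, settled $n=3$; for general $n\geqslant 4$ the question remains unresolved. The inductive Brouwer/Borsuk--Ulam step you sketch is essentially the natural attack on Petty's conjecture, and the reason the conjecture is still open is precisely that this step fails to close: once $k\geqslant 4$, parametrizing candidate points by a sphere and tracking the discrepancy vector does not produce a topological obstruction strong enough to force a zero for an arbitrary norm.

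What \cite{11} actually supplies is a weaker quantitative statement: the equilateral dimension of an $n$-dimensional normed space tends to infinity with $n$ (at a rate far slower than $n+1$). That already suffices for Theorem~\ref{theorem: infEd}: given $m$, choose a finite-dimensional subspace $Y\subset X$ of dimension large enough that Brass's bound guarantees $m$ equidistant points in $Y$. Your reduction-to-a-subspace idea is correct; only the finite-dimensional input you feed into it is not a theorem.
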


{\bf Definition 9.} Let $X$ be an arbitrary metric space, and $\varepsilon > 0$ an arbitrary positive number. The subset $P\subset X$ is called $\varepsilon$-\emph{separated}, if for any two points $p$ and $q$ in $P$ the inequality holds $|pq|\geqslant\varepsilon$. 

{\bf Definition 10.} Let $V$ be a normed space, and $m$ be some positive integer. We denote the infinum of positive numbers $r$ such that there exists a $1$-separated set of $m$ points in $B_r(0)$ by $R_m(V)$.

Finally, we need a few classical results from analysis.

\begin{theorem}[\cite{1}, \cite{10}]\label{theorem: theorem3}
Let $K$ be a compact metric space. Then an arbitrary isometric mapping $f\colon K\to K$ is surjective and, hence, is an isometry.
\end{theorem}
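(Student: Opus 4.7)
The plan is to prove surjectivity by contradiction, exploiting the iterates of $f$ and the sequential compactness of $K$.

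Suppose $f$ is not surjective. Since an isometric mapping is continuous, $f(K)$ is the continuous image of a compact set, hence compact and in particular closed in $K$. So there exists a point $p\in K\setminus f(K)$ with $\delta:=\dist\bigl(\{p\},\,f(K)\bigr)>0$. I would then consider the orbit $p_0=p,\ p_1=f(p),\ p_2=f^2(p),\,\ldots$, where $f^n$ denotes the $n$-fold composition.

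The key computation is the following: for any integers $0\leqslant n<m$, using that $f$ preserves distances (so $f^n$ does too),
\[
|p_n p_m| \;=\; |f^n(p)\,f^n(f^{m-n}(p))| \;=\; |p\,f^{m-n}(p)| \;\geqslant\;\delta,
\]
because $f^{m-n}(p)\in f(K)$. Thus the sequence $(p_n)$ is $\delta$-separated, so it admits no Cauchy subsequence and therefore no convergent subsequence in $K$. This contradicts the compactness of $K$, so $f$ must be surjective.

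Once $f$ is surjective, it is a distance-preserving bijection, hence by definition an isometry of $K$. The only step that requires any care is the argument that $f(K)$ is closed, and that is immediate from continuity of $f$ together with compactness of $K$; I do not expect any real obstacle in this proof.
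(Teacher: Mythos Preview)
Your argument is correct and is in fact the standard proof of this classical fact. Note, however, that the paper does not supply its own proof of Theorem~\ref{theorem: theorem3}: it is quoted as a known result from \cite{1} and \cite{10} and used as a black box in the proof of Theorem~\ref{theorem: base-iso}. So there is nothing in the paper to compare your argument against beyond the bare statement. Your proof is exactly the one found in the cited references: one shows $f(K)$ is closed by compactness, picks a point at positive distance from $f(K)$, and uses the isometry property of the iterates $f^n$ to produce a $\delta$-separated sequence contradicting sequential compactness. No step is missing.
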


\begin{theorem}[\cite{9}]\label{theorem: theorem6}
Two arbitrary normed spaces are isometric iff their closed unit balls are isometric.
\end{theorem}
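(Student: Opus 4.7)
One implication is immediate: a linear isometry $T\colon E\to F$ preserves norms, hence restricts to a bijective isometry $B_1(0_E)\to B_1(0_F)$. For the converse I would start from a surjective isometry $\phi\colon B_1(0_E)\to B_1(0_F)$ and aim to produce a linear isometry $E\to F$ in three steps.

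First I would show that $\phi(0_E)=0_F$ by distinguishing the origin metrically inside the ball. Namely, the function $r(x):=\sup_{y\in B_1(0)}\|x-y\|$ equals $1+\|x\|$: the upper bound is the triangle inequality, and for $x\ne 0$ the vector $y=-x/\|x\|$ lies in $B_1(0)$ and realises $\|x-y\|=1+\|x\|$. Hence $r$ is minimised uniquely at the origin, and since $r$ is preserved by any isometry of $B_1(0)$ onto $B_1(0)$, the image of $0_E$ is forced to be $0_F$.

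Next I would extend $\phi$ to $\Phi\colon E\to F$ by rescaling: for every $n\in\mathbb{N}$ the formula $\phi_n(x):=n\phi(x/n)$ defines a surjective isometry $B_n(0_E)\to B_n(0_F)$ that fixes the origin. The non-trivial task is to check that the family $\{\phi_n\}$ is compatible on overlaps, so that their union is a well-defined isometry $\Phi\colon E\to F$. Here I would invoke Mankiewicz's theorem: every isometry between open connected subsets of real normed spaces admits a unique extension to a surjective affine isometry of the ambient spaces. Applying it to $\phi_n$ and $\phi_m$ (restricted to open balls) yields two affine extensions that must coincide, forcing $\phi_n=\phi_m$ on $B_{\min(n,m)}(0_E)$. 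Finally, by the Mazur--Ulam theorem $\Phi$ is affine, and combined with $\Phi(0_E)=0_F$ it is the desired linear isometry.

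The hard part is the extension step. A naive pointwise approach demanding $(n+1)\phi(x/(n+1))=n\phi(x/n)$ only pins the image $\phi(x/(n+1))$ to a metric geodesic from $0$ to $\phi(x/n)$, and in a normed space that is not strictly convex such a geodesic is not unique. Mankiewicz's theorem is precisely the ingredient that removes the ambiguity coming from flat faces of the unit sphere, and at this level of generality its use seems unavoidable.
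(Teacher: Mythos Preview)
The paper does not supply a proof of this statement; it is quoted from Mankiewicz's paper as a known result. Your sketch is essentially correct and names the right ingredient, but the $\phi_n$ construction is an unnecessary detour and the compatibility step is under-justified as written.

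You assert that the Mankiewicz extensions $\Phi_n$ of $\phi_n$ and $\Phi_m$ of $\phi_m$ ``must coincide'', but this needs an argument: a priori $\phi_n$ and $\phi_m$ are different isometries of different balls, and linear isometries $E\to F$ are far from unique. The missing observation is that each $\Phi_n$ is linear (affine with $\Phi_n(0)=0$), so for $x\in U_1(0_E)$ one has $\Phi_n(x)=\tfrac{1}{n}\Phi_n(nx)=\tfrac{1}{n}\phi_n(nx)=\phi(x)$; hence every $\Phi_n$ already extends $\phi|_{U_1(0_E)}$, and the uniqueness clause in Mankiewicz's theorem then forces all the $\Phi_n$ to agree. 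Once you see this, the scaffolding collapses: apply Mankiewicz a single time to $\phi|_{U_1(0_E)}$ --- which is a surjective isometry onto $U_1(0_F)$ because, after your first step, $\phi$ preserves norms --- to obtain directly a surjective affine isometry $\Phi\colon E\to F$ with $\Phi(0)=0$, hence linear. The forward direction also deserves a word: ``isometric'' in the statement means isometric as metric spaces, so one should invoke Mazur--Ulam to pass to a \emph{linear} isometry before restricting to the unit balls.
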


\begin{center}
\section{Finite-dimensional case}
\end{center}

In this section we show that any two finite-dimensional normed spaces on a finite Gromov--Hausdorff distance are isometric to each other.

\begin{theorem}\label{theorem: base-lemma}
Let $V$ be a finite-dimensional normed space and $X$ be an arbitrary bounded metric space. Suppose there exists a sequence of positive numbers $(\varepsilon_n)_{n\in\mathbb{N}}$ converging to $0$ such that for any $n$ there exists a mapping $f_n\colon X\to V$ with $\dis(f_n) \leqslant \varepsilon_n$. Then the completion $\tilde{X}$ of $X$ is compact and both $X$ and $\tilde{X}$ can be isometrically embedded into $V$.
\end{theorem}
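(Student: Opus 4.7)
The plan is to first prove that $X$ is totally bounded (so that $\tilde{X}$ is compact), and then construct an isometric embedding $\tilde{X}\hookrightarrow V$ as a pointwise limit of suitably translated $f_n$. Since $X$ is (isometric to) a dense subset of $\tilde{X}$, restricting this embedding to $X$ will immediately yield the second embedding asserted in the theorem.

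For the first step I would fix a basepoint $x_0\in X$ and pass to the translates $g_n(x) = f_n(x) - f_n(x_0)$. These satisfy $\dis(g_n) = \dis(f_n) \leq \varepsilon_n$ and $\|g_n(x)\| \leq |xx_0| + \varepsilon_n \leq \diam(X) + \varepsilon_n$, so every image $g_n(X)$ lies in a common closed ball $B_R(0)\subset V$. Now suppose for contradiction that $X$ is not totally bounded: then there exist $\delta > 0$ and an infinite $\delta$-separated subset $S\subset X$. For $n$ large enough with $\varepsilon_n < \delta/2$, the bound $\bigl|\|g_n(x)-g_n(x')\| - |xx'|\bigr| \leq \varepsilon_n$ forces $g_n(S)$ to be an infinite $(\delta/2)$-separated subset of $B_R(0)$. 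But $V$ is finite-dimensional, so $B_R(0)$ is compact and therefore cannot contain an infinite $(\delta/2)$-separated subset. This contradiction shows that $X$ is totally bounded, and hence $\tilde{X}$ is compact.

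For the second step, total boundedness of $X$ yields a countable dense subset $\{x_i\}_{i\in\mathbb{N}}\subset X$. For each $i$ the sequence $(g_n(x_i))_n$ lies in the compact set $B_R(0)$, so a standard diagonal extraction produces a subsequence $(n_k)$ along which $g_{n_k}(x_i)\to y_i$ in $V$ for every $i$. Passing to the limit in the inequality $\bigl|\|g_{n_k}(x_i)-g_{n_k}(x_j)\| - |x_ix_j|\bigr| \leq \varepsilon_{n_k}$ gives $\|y_i-y_j\| = |x_ix_j|$, so $x_i\mapsto y_i$ is an isometry from the dense set $\{x_i\}$ into $V$. Since $V$ is complete and $\{x_i\}$ is dense in $\tilde{X}$, this map extends uniquely by uniform continuity to an isometric embedding $\tilde{X}\hookrightarrow V$, whose restriction to $X\subset\tilde{X}$ gives the required embedding of $X$.

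The main conceptual point, rather than a technical obstacle, will be the total-boundedness step: it is precisely the finite-dimensionality of $V$ (via compactness of closed balls) that forces $X$ to be totally bounded, which in turn is what permits the diagonal extraction in the second step. Once these two inputs are isolated, the construction of the embedding is just the standard ``limit of approximate embeddings'' argument.
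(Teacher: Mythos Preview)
Your proof is correct, but it takes a genuinely different route from the paper. The paper translates the $f_n$ just as you do, but then works with the \emph{closures} $X_n=\overline{g_n(X)}$ as points in the hyperspace $\mathcal{H}(B)$ of the compact ball $B$; compactness of $\mathcal{H}(B)$ (the Blaschke-type selection theorem quoted as Theorem~\ref{theorem: theorem1}) yields a Hausdorff-convergent subsequence $X_{n_s}\to A$, and the chain of inequalities $\dist_{GH}(X,A)\leqslant\dist_{GH}(X,X_{n_s})+\dist_H(X_{n_s},A)\to 0$ together with Theorem~\ref{theorem: theorem2} forces $\tilde X\cong A$. By contrast, you avoid both of these black boxes: you prove total boundedness of $X$ directly by pushing an infinite separated set into the compact ball, and you build the isometric embedding by a pointwise diagonal extraction on a countable dense set followed by extension by uniform continuity. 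Your argument is more elementary and self-contained; the paper's argument is shorter once the hyperspace and $\dist_{GH}=0$ machinery is in hand, and it fits more naturally with the Gromov--Hausdorff viewpoint adopted throughout the paper.
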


\begin{proof}
Fix an arbitrary $x\in X$. By $t_n$ denote a translation of $V$ by the vector $-f_n(x)$. Let $g_n =t_n \circ f_n$. Since $t_n$ is an isometry, we have $\dis\,g_n = \dis\,f_n$. For every $n\in\mathbb{N}$ define $X_n = \overline{g_n(X)}$ --- a closure of $g_n(X)$ in the topology on $V$ generated by the norm. By the definition of the distortion, for any points $a,\,b\in X$ the inequality holds $\bigl| \|a-b\| - \|g_n(a) - g_n(b)\|\bigr|\leqslant \varepsilon_n$. According to Claim \ref{claim: distGHformula}, we obtain that $\dist_{GH}\bigl(X,\,g_n(X)\bigr) \leqslant \frac{\varepsilon_n}{2}$. Since $\dist_{GH}\bigl(g_n(X),\,\overline{g_n(X)}\bigr) = 0$, by applying the triangle inequality we obtain that $$\dist_{GH}(X,\,X_n) \leqslant \dist_{GH}\bigl(X,\,g_n(X)\bigr) + \dist_{GH}(g_n(X),\,X_n) \leqslant \frac{\varepsilon_n}{2}.$$ Note that for every $n\in\mathbb{N}$ the point $0$ belongs to the set $X_n$. Also $\diam\,X_n\leqslant \diam\,X+\varepsilon_n$. Hence, all $X_n$ belong to the ball $B := B_{\diam(X)+c}(0)$, where $c$ is some constant such that $\varepsilon_n\leqslant c$ for every $n$. The space $V$ is finite-dimensional, thus, $B$ is compact. Therefore, $\mathcal{H}(B)$ is compact by Theorem \ref{theorem: theorem1}. Sets $X_n$ are closed and bounded, so they form a sequence of points in $\mathcal{H}(B)$. 
Then it follows from the compactness of $\mathcal{H}(B)$ that there exists a subsequence $X_{n_s}$ that converges to some non-empty closed and bounded subset $A$ of $B$. Note that $A$ is compact as the closed subset of a compact ball $B$. Then for every $s$ the following inequalities hold $$\dist_{GH}(X,\,A)\leqslant \dist_{GH}(X,\,X_{n_s}) + \dist_{GH}(X_{n_s},\,A) \leqslant \dist_{GH}(X,\,X_{n_s})+\dist_{H}(X_{n_s},\,A).$$ The right side tends to $0$ when $s\to\infty$. Thus, $\dist_{GH}(X,\,A) = 0$.  Since $\dist_{GH}(X,\,\tilde{X}) = 0$, the triangle inequality implies, that $\dist_{GH}(\tilde{X},\,A) \leqslant \dist_{GH}(\tilde{X},\,X) + \dist_{GH}(X,\,A) = 0$, i.e., $\dist_{GH}(\tilde{X},\,A) = 0$. Then from Theorem \ref{theorem: theorem2} it follows that $\tilde{X}$ and $A$ are isometric. It implies that $\tilde{X}$ is compact and can be isometrically embedded into $V$. In particular, $X$ can be isometrically embedded into $V$.  
\end{proof}

\begin{lemma} \label{lemma: lessthaninf}
If $X,\,Y$ are normed spaces such that $\dist_{GH}(X,\,Y) < \infty$, then $\dist_{GH}(X,\,Y) = 0$.
\end{lemma}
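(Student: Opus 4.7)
The plan is to exploit the scale-invariance of normed spaces. A real normed space $X$, viewed purely as a metric space, is self-similar: for every $\lambda>0$ the map $x\mapsto x/\lambda$ sends $(X,\|\cdot\|)$ isometrically onto the rescaled space $(X,\lambda\|\cdot\|)$. In metric-space language, if we denote by $\lambda X$ the space $X$ with its metric multiplied by $\lambda$, then $\lambda X$ is isometric to $X$, and the same holds for $Y$.

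First I would record how the Gromov--Hausdorff distance behaves under a simultaneous rescaling of both arguments. Using Claim~\ref{claim: distGHformula}: any correspondence $R\in\mathcal{R}(X,Y)$ is also a correspondence in $\mathcal{R}(\lambda X,\lambda Y)$, and its distortion simply scales by $\lambda$ since every distance in both spaces is multiplied by $\lambda$. Taking the infimum gives the homogeneity
\[
\dist_{GH}(\lambda X,\lambda Y)=\lambda\,\dist_{GH}(X,Y).
\]
Next, because $\lambda X$ and $\lambda Y$ are isometric to $X$ and $Y$ respectively, the Gromov--Hausdorff distance is unchanged when we pass to the rescaled copies:
\[
\dist_{GH}(\lambda X,\lambda Y)=\dist_{GH}(X,Y).
\]

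Combining the two displays yields $\lambda\,\dist_{GH}(X,Y)=\dist_{GH}(X,Y)$ for every $\lambda>0$. Assuming $\dist_{GH}(X,Y)<\infty$, choosing any $\lambda\neq 1$ (equivalently, sending $\lambda\to 0$) forces $\dist_{GH}(X,Y)=0$, which is the claim.

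I do not anticipate a genuine obstacle: the only thing to be careful about is justifying the two equalities above precisely. The first is a routine reading of Claim~\ref{claim: distGHformula}, and the second is the explicit isometry $x\mapsto x/\lambda$, written separately for $X$ and $Y$. Note that the argument does not use finite-dimensionality, so the lemma is valid for arbitrary normed spaces, as stated.
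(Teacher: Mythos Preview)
Your proof is correct and follows exactly the same approach as the paper: both use the chain $\dist_{GH}(X,Y)=\dist_{GH}(\lambda X,\lambda Y)=\lambda\,\dist_{GH}(X,Y)$ coming from the self-similarity of normed spaces and the homogeneity of $\dist_{GH}$ under simultaneous rescaling. You simply spell out the two equalities in more detail than the paper does.
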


\begin{proof}
Suppose $\dist_{GH}(X,\,Y) = c < \infty$. Then for an arbitrary $\lambda > 0$ the following equalities hold $c = \dist_{GH}(X,\,Y) = \dist_{GH}(\lambda X,\,\lambda Y) = \lambda \dist_{GH}(X,\,Y) = \lambda c$. Hence, $c = 0$.
\end{proof}

\begin{theorem} \label{theorem: base-iso}
If $V,\,W$ are arbitrary finite-dimensional normed spaces with $\dist_{GH}(V,\,W) < \infty$, then $V$ and $W$ are isometric.
\end{theorem}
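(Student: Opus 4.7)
The plan is to show that the closed unit balls $B^V := B_1(0) \subset V$ and $B^W := B_1(0) \subset W$ are isometric as metric spaces, after which Theorem~\ref{theorem: theorem6} immediately yields that $V$ is isometric to $W$.

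By Lemma~\ref{lemma: lessthaninf} we have $\dist_{GH}(V, W) = 0$, so Claim~\ref{claim: distGHformula} provides correspondences $R_n \in \mathcal{R}(V, W)$ with $\dis R_n = \varepsilon_n \to 0$. Translating both coordinates of $R_n$ leaves the distortion unchanged, so I may assume $(0, 0) \in R_n$. For each $n$, I choose $f_n\colon V \to W$ with $(v, f_n(v)) \in R_n$ for every $v$ and $f_n(0) = 0$; then $\dis f_n \leq \varepsilon_n$. The key claim is the Hausdorff estimate $\dist_H(f_n(B^V), B^W) = O(\varepsilon_n)$ inside $W$. One inclusion is easy: $\|f_n(v)\| \leq \|v\| + \varepsilon_n \leq 1 + \varepsilon_n$ for $v \in B^V$, so $f_n(B^V) \subset B_{1+\varepsilon_n}(0) \subset U_{\varepsilon_n}(B^W)$. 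For the other, given $w \in B^W$, surjectivity of $\pi_V|_{R_n}$ supplies $v \in V$ with $(v, w) \in R_n$; comparing $(v, w)$ first with $(0, 0)$ and then with $(v, f_n(v))$ in $R_n$ yields $\|v\| \leq 1 + \varepsilon_n$ and $\|w - f_n(v)\| \leq \varepsilon_n$. The radial projection $v' := v/\max(\|v\|, 1) \in B^V$ differs from $v$ by at most $\varepsilon_n$, so the distortion bound on $f_n$ gives $\|w - f_n(v')\| \leq 3\varepsilon_n$, showing $B^W \subset U_{3\varepsilon_n}(f_n(B^V))$.

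I then apply Theorem~\ref{theorem: base-lemma} with $X = B^V$, ambient normed space $W$, and the restrictions $f_n|_{B^V}$: since $B^V$ is bounded and $W$ is finite-dimensional, the theorem yields a compact $A \subset W$ isometric to $B^V$, which its proof realises as a Hausdorff subsequential limit of $\overline{f_n(B^V)}$. But the estimate above shows the whole sequence converges in Hausdorff distance to $B^W$, so $A = B^W$, and $B^V$ is isometric to $B^W$. (An alternative route avoids appealing to the proof of Theorem~\ref{theorem: base-lemma}: the correspondence $\{(v, f_n(v)) : v \in B^V\}$ together with Claim~\ref{claim: distGHformula} gives $\dist_{GH}(B^V, f_n(B^V)) \leq \varepsilon_n/2$, which combined with the Hausdorff estimate yields $\dist_{GH}(B^V, B^W) \to 0$, and Theorem~\ref{theorem: theorem2} finishes the job since $B^V$ and $B^W$ are compact in finite-dimensional spaces.) Either way, Theorem~\ref{theorem: theorem6} concludes that $V$ is isometric to $W$.

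The main obstacle is establishing the covering bound $B^W \subset U_{O(\varepsilon_n)}(f_n(B^V))$: this is where the correspondence property of $R_n$ (surjectivity of $\pi_V|_{R_n}$) and the anchoring $(0, 0) \in R_n$ are both essential, since Theorem~\ref{theorem: base-lemma} by itself only locates some isometric copy of $B^V$ inside $W$, not necessarily at the specific unit ball $B^W$.
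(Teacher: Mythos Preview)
Your argument is correct, apart from a small slip: where you write ``surjectivity of $\pi_V|_{R_n}$ supplies $v\in V$ with $(v,w)\in R_n$'' you need surjectivity of $\pi_W|_{R_n}$ (the $W$-projection), not of $\pi_V$; the latter was already used to define $f_n$. With that corrected, the Hausdorff estimate and both of your concluding routes go through.

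The route, however, differs from the paper's. You work asymmetrically: you pin down the limiting set by the Hausdorff estimate $\dist_H\bigl(f_n(B^V),\,B^W\bigr)\to 0$, deduce $\dist_{GH}(B^V,\,B^W)=0$, and invoke Theorem~\ref{theorem: theorem2}. The paper instead applies Theorem~\ref{theorem: base-lemma} symmetrically to get isometric embeddings $B^V\hookrightarrow W$ and $B^W\hookrightarrow V$ (which, after translating the image of the origin to the origin, land inside $B^W$ and $B^V$ respectively), composes them to obtain isometric self-maps of the compact balls, and uses Theorem~\ref{theorem: theorem3} to conclude both embeddings are bijections. The paper's approach treats Theorem~\ref{theorem: base-lemma} as a black box and needs no covering computation; your approach avoids Theorem~\ref{theorem: theorem3} entirely, at the cost of either reaching into the proof of Theorem~\ref{theorem: base-lemma} to identify the Hausdorff limit, or bypassing that theorem altogether via your alternative triangle-inequality route. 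Of your two versions, the alternative is cleaner, since it does not depend on internal details of another proof.
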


\begin{proof}
From Lemma \ref{lemma: lessthaninf} we have $\dist_{GH}(V,\,W) = 0$. Then, by Claim \ref{claim: distGHformula}, for an arbitrary positive number $\varepsilon > 0$ there exists a correspondence $R$ between $V$ and $W$ with the distortion $\dis\,R \leqslant \varepsilon$.

Let us consider the unit balls $B_1 = B^V_1(0),\,B_2 = B^W_1(0)$ of the spaces $V,\,W$ correspondingly. It follows from Theorem \ref{theorem: base-lemma} that there exist isometric maps $f_1\colon B_1\to B_2$ и $f_2\colon B_2\to B_1$.
Then $f_2\circ f_1\colon B_1\to B_1$ and $f_1\circ f_2\colon B_2\to B_2$ are isometric embeddings. From Theorem \ref{theorem: theorem3} we conclude that the constructed mappings $f_2\circ f_1$, $f_1\circ f_2$ are isometries. It follows immediately that $f_1$ and $f_2$ are bijective, so the balls $B_1$ and $B_2$ are isometric to each other. From Theorem \ref{theorem: theorem6} we conclude that the spaces $X$ and $Y$ are isometric. 
\end{proof}

{\bf Example 1.} Let us consider any two non-isometric normed spaces $X$ and $Y$. For example, two copies of $\mathbb{R}^2$ one with the euclidean norm and one with the $\max$-norm. Let us also consider any $\varepsilon_1$-network $\sigma_1$ in $X$ and any $\varepsilon_2$-network $\sigma_2$ in $Y$. Then it immediately follows from the proven theorem that $\dist_{GH}(\sigma_1,\,\sigma_2)=\infty$. In particular, the Gromov--Hausdorff distance between $\mathbb{Z}^2$ with metrics induced from $X$ and $Y$ respectively is infinite.

\begin{center}
\section{Metric imbalance}
\end{center}

We start this section with a few results about point sets in finite-dimensional normed spaces whose cardinalities exceed the equilateral dimension of the corresponding space (which is finite due to Theorem \ref{theorem: Soltan}). Firstly, we prove an important inequality that characterises such point sets.

\begin{theorem}\label{theorem: first_ineq_normed}
Let $V$ be a finite-dimensional normed space with $\ed(V) = p$. Then there exists a constant $c>0$ such that for an arbitrary set of $m > p$ distinct points $v_1,\,\ldots,\,v_m$ in $V$ there exist three of them $\{v_i,\,v_j,\,v_k\}$ such that
$$\varphi(v_i,\,v_j,\,v_k) := \left|\frac{\|v_i-v_k\|}{\|v_j - v_k\|}  - 1\right| \geqslant c.$$ 
\end{theorem}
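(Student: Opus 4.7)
The plan is a compactness-with-contradiction argument, leveraging the translation and scaling invariance of $\varphi$. First, I would reduce to the case $m = p+1$: any $(p+1)$-point subset of an $m$-point configuration is itself a $(p+1)$-point configuration of distinct points, and its maximum of $\varphi$ over triples is bounded above by that of the full configuration. Hence the worst-case value $c_m := \inf_{\text{configs}}\max_{i,j,k}\varphi(v_i, v_j, v_k)$ is non-decreasing in $m$, and it suffices to show $c_{p+1} > 0$; the resulting $c = c_{p+1}$ then works uniformly for every $m > p$.

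Suppose for contradiction that $c_{p+1} = 0$. Then there is a sequence of $(p+1)$-point configurations $\{v_1^{(n)}, \ldots, v_{p+1}^{(n)}\}$ in $V$ with $\varphi(v_i^{(n)},\,v_j^{(n)},\,v_k^{(n)}) < 1/n$ for every triple of pairwise distinct indices. Using translation and scaling invariance of $\varphi$, I renormalize so that $v_1^{(n)} = 0$ and $\max_{i\ne j}\|v_i^{(n)} - v_j^{(n)}\| = 1$. Since there are only finitely many ordered pairs of indices, after passing to a subsequence I may assume that the maximum is always attained by the pair $(1,2)$, so $\|v_2^{(n)}\| = 1$. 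All points now lie in the closed unit ball of $V$, which is compact by finite-dimensionality. Extracting a further diagonal subsequence, I may assume $v_i^{(n)} \to v_i^*$ for every index $i$.

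The remaining task is to show that $\{v_1^*, \ldots, v_{p+1}^*\}$ is an equilateral set of $p+1$ distinct points with common pairwise distance $1$, contradicting the hypothesis $\ed(V) = p$. By construction $\|v_1^* - v_2^*\| = 1$. For every index $i$ distinct from $1$ and $2$, the bound $\varphi(v_i^{(n)}, v_2^{(n)}, v_1^{(n)}) < 1/n$ together with $\|v_2^{(n)} - v_1^{(n)}\| = 1$ forces $\|v_i^* - v_1^*\| = 1$. Then for any $k \ne 1$ and any $i \notin \{1,k\}$, the bound $\varphi(v_i^{(n)}, v_1^{(n)}, v_k^{(n)}) < 1/n$ combined with $\|v_1^{(n)} - v_k^{(n)}\| \to 1$ yields $\|v_i^* - v_k^*\| = 1$, completing the verification.

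No single step is particularly deep, but the delicate point is the normalization: rescaling by the \emph{largest} pairwise distance simultaneously prevents the limiting configuration from collapsing to a single point and keeps everything inside a fixed compact ball, and the pigeonhole subsequencing on the extremal pair $(1,2)$ anchors one nontrivial distance equal to $1$ in the limit, around which all remaining distances can then be forced to $1$.
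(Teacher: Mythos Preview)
Your proof is correct and follows essentially the same compactness-by-contradiction strategy as the paper: normalize a hypothetical sequence of configurations using the scale- and translation-invariance of $\varphi$, pass to a convergent subsequence in a compact ball of the finite-dimensional space $V$, and show the limit is an equilateral set of more than $\ed(V)$ points. The only minor differences are cosmetic---you normalize by the \emph{maximum} pairwise distance and pigeonhole on the extremal pair (the paper simply fixes $\|v_1^n-v_2^n\|=1$ and then bounds the other distances via $\varphi$), and you make the reduction to $m=p+1$ explicit up front, whereas the paper records the monotonicity of $c_m$ separately after the proof.
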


\begin{proof}
Arguing by contradiction, suppose that for every $\varepsilon_n = \frac{1}{n},\,n\in\mathbb{N},\,n\geqslant 6$ there exist distinct points $v_1^n,\,\ldots,\,v_m^n$ such that for all different $i,\,j,\,k$ the following inequalities hold $\varphi(v^n_i,\,v^n_j,\,v^n_k)<\varepsilon_n$. For every $\lambda > 0$ we have $\varphi(\lambda v^n_i,\,\lambda v^n_j,\,\lambda v^n_k) = \varphi(v^n_i,\,v^n_j,\,v^n_k)$, so without loss of generality we assume that $\|v^n_1-v^n_2\|=1$. Then inequalities $\varphi(v^n_i,\,v^n_2,\,v^n_1)<\varepsilon_n$ imply that $\bigl|\|v^n_1v^n_i\|-1\bigr|\leqslant\varepsilon_n$ for each $i=1,\,\ldots,\,m$. Hence, from $\varphi(v^n_j,\,v^n_1,\,v^n_i)<\varepsilon_n$ it now follows that $$\bigl|\|v^n_i-v^n_j\|-\|v^n_1-v^n_i\|\bigr|\leqslant\varepsilon_n\|v_1-v_i\|\leqslant \varepsilon_n(1+\varepsilon_n)< 2\varepsilon_n.$$ By triangle inequality, $$\bigl|\|v^n_i-v^n_j\|-1\bigr|\leqslant \bigl|\|v^n_i-v^n_j\|-\|v^n_1-v^n_i\|\bigr| + \bigl|\|v^n_1-v^n_i\|-1\bigr| < 3\varepsilon_n.$$ Thus, for every distinct $i$ and $j$ the following equality holds $\|v^n_i-v^n_j\| = 1+\delta_n$ where $|\delta_n|<3\varepsilon_n$. In particular, $\|v^n_i-v^n_j\|\geqslant \frac{1}{2}$ for $i\neq j$.

Let us put $\xi^n = (0,\,v_2^n-v_1^n\ldots,\,v_m^n-v_1^n)\in V^{m}$ and consider $V^m$ with the norm $$\|(v_1,\,v_2,\,\ldots,\,v_m)\| = \max_q \|v_q\|.$$ 

Note that the sequence $\xi^n$ is bounded due to already established inequalities $$\bigl|\|v^n_i - v^n_1\|-1\bigr|\leqslant \varepsilon_n,\; i=1,\,\ldots,\,m.$$ Consider the subset $P = \bigl\{(v_1,\,\ldots,\,v_m)\colon \|v_i-v_j\| \geqslant \frac{1}{2} \;\forall\,i\neq j\bigr\}\subset V^m$ with the metric induced from $V^m$. Since $\|v^n_i-v^n_j\|\geqslant \frac{1}{2}$ for $i\neq j$, the sequence $\xi^n$ lies in $P$. The space $V^m$ is finite-dimensional so there exists a subsequence $\xi^{n_s}$ that converges to some $\xi = (w_1,\,\ldots,\,w_m)$, which belongs to $P$ due to the closeness of $P$ in $V^m$. Define a function $\tilde{\varphi}\colon V^m\to \mathbb{R}$ by the formula $$\tilde{\varphi}(v_1,\,\ldots,\,v_m) = \max_{i\neq j\neq k\neq i} \varphi(v_i,\,v_j,\,v_k).$$ Note that $\tilde{\varphi}$ is continuous on $P$. Since $\varphi(v^n_i,\,v^n_j,\,v^n_k) = \varphi(v^n_i-v^n_1,\,v^n_j-v^n_1,\,v^n_k-v^n_1)$, it follows from the inequalities $\varphi(v^n_i,\,v^n_j,\,v^n_k)<\varepsilon_n$ that $\varphi(v^n_i-v^n_1,\,v^n_j-v^n_1,\,v^n_k-v^n_1)<\varepsilon_n$. The latter inequalities are equivalent to $\tilde{\varphi}(0,\,v^n_2-v^n_1,\,\ldots,\,v^n_m-v^n_1) < \varepsilon_n$. Hence, from the continuity of $\tilde{\varphi}$ on $P$ we conclude that $\tilde{\varphi}(w_1,\,\ldots,\,w_m) = 0$. Thus, for every different $i,\,j,\,k$ we have $\varphi(w_i,\,w_j,\,w_k) = 0$ which is equivalent to $\|w_i-w_k\| = \|w_j-w_k\|$. Therefore, $(w_1,\,\ldots,\,w_m)$ is an equilateral set in $V$ of cardinality $m > \ed(V) = p$ --- contradiction.
\end{proof}

Let $V$ be an arbitrary finite-dimensional normed space. Let us also fix a positive integer $m$. We define a \emph{metric imbalance} of $V$ of order $m$ as follows $$c_m(V) = \sup\left\{ c\colon \forall\,(v_1,\,\ldots,\,v_m)\in V^m\,\exists\,i\neq j,\,j\neq k,\,k\neq i\colon\left|\frac{|v_iv_k|}{|v_jv_k|}-1\right|\geqslant c\right\}.$$
We conclude immediately from this definition that metric imbalance is a non-decreasing function of $m$. Besides, Theorem \ref{theorem: first_ineq_normed} implies that if $m > \ed(V)$ then $c_m(V) > 0$.  

\begin{theorem}\label{theorem: nicequantity}
Let $V$ be a finite-dimensional normed space. Denote its metric imbalance by $c_m:=c_m(V)$ and 
$R_m:=R_m(V)$. Then the following inequalities hold $$2R_m + 1 \geqslant c_m \geqslant R_m-2.$$
\end{theorem}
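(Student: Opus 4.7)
The plan is to prove the two inequalities separately; both come from short direct arguments relating almost-balanced $m$-tuples to $1$-separated sets in small balls around one of their points.

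For the upper bound $c_m \leqslant 2R_m + 1$ I would exhibit a near-equilateral tuple. Fix any $\varepsilon > 0$ and, by the definition of $R_m$, choose a $1$-separated set $\{v_1,\ldots,v_m\}$ inside $B_{R_m+\varepsilon}(0)$. Every pairwise distance then satisfies $1 \leqslant \|v_i - v_j\| \leqslant 2(R_m+\varepsilon)$, so every quotient $\|v_i - v_k\|/\|v_j - v_k\|$ lies in $\bigl[\tfrac{1}{2R_m+2\varepsilon},\,2R_m+2\varepsilon\bigr]$ and therefore $\varphi(v_i,v_j,v_k) \leqslant 2R_m - 1 + 2\varepsilon$ for every distinct triple. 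By the definition of $c_m$ as a supremum this forces $c_m \leqslant 2R_m - 1 + 2\varepsilon$, and letting $\varepsilon \to 0$ yields the stated (in fact slightly stronger) bound.

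For the lower bound $c_m \geqslant R_m - 2$ I would argue by contradiction; the argument actually gives the stronger estimate $c_m \geqslant R_m - 1$. Suppose some tuple $(v_1,\ldots,v_m)$ satisfies $\varphi(v_i,v_j,v_k) < c$ for every distinct triple, where $c < R_m - 1$. For each fixed $k$ set $d_k := \min_{i\neq k}\|v_i - v_k\|$; applying the hypothesis with $j$ attaining this minimum gives $\|v_i - v_k\| < (1+c)\,d_k$ for every $i \neq k$. Now choose $k_0$ minimising $d_{k_0}$, so that $d_{k_0}$ equals the overall smallest pairwise distance $d$. Then every $v_i$, including $v_{k_0}$ itself, lies in the closed ball $B_{(1+c)d}(v_{k_0})$. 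Rescaling by $1/d$ produces a $1$-separated set of $m$ points in a ball of radius $1+c$, forcing $R_m \leqslant 1+c < R_m$, a contradiction. Consequently every $c < R_m - 1$ is realised by some triple, whence $c_m \geqslant R_m - 1 \geqslant R_m - 2$.

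I do not anticipate any serious obstacle. The only bookkeeping point is that $R_m$ is defined as an infimum and need not be attained: this is handled by the $\varepsilon$-slack in the upper bound and absorbed automatically by the strict inequality $1+c < R_m$ in the lower bound. The underlying geometric idea is the same in both directions: the triple-ratio condition $\varphi < c$ is a quantitative form of equilaterality, and whether it can be realised by $m$ points is governed exactly by how tightly one can pack a $1$-separated set into a ball, which is what $R_m$ measures.
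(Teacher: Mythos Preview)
Your proof is correct and follows essentially the same approach as the paper: in both directions one normalises an almost-balanced $m$-tuple (or, dually, a $1$-separated set near the packing optimum) and reads off the relationship between $c_m$ and $R_m$. Your write-up is in fact slightly tighter, yielding $2R_m-1\geqslant c_m\geqslant R_m-1$; the paper's argument also implicitly gives $c_m\geqslant R_m-1$ for the lower bound but records the looser constants in the statement.
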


\begin{proof}
Let us put $t_n = c_m+\frac{1}{n}$, $n \in \mathbb{N}$. By definition of $c_m$ there exists such set of distinct points $(x^n_1,\,\ldots,\,x^n_m)$ that for all different $i,\,j,\,k$ the following inequalities hold $\varphi(x^n_i,\,x^n_j,\,x^n_k) \leqslant t_n$. 

Note that for every $\lambda > 0$ it holds $\varphi(x^n_i,\,x^n_j,\,x^n_k) = \varphi(\lambda x^n_i,\,\lambda 
 x^n_j,\,\lambda  x^n_k)$. Hence, by putting $\lambda = \min_{i\neq j}|x^n_ix^n_j|$ and $y^n_i = \frac{x^n_i}{\lambda}$, we obtain such set of points $(y^n_1,\,\ldots,\,y^n_m)$ that for all different $i,\,j,\,k$ we have $\varphi(y^n_i,\,y^n_j,\,y^n_k)\leqslant t_n$, and also $\min_{i\neq j}|y^n_iy^n_j|=1$. By renumerating the points, without loss of generality we can assume that $\min_{i\neq j}|y^n_iy^n_j| = |y^n_1y^n_2|$. Let us put $z^n_j = y^n_j-y^n_1$. Since $\varphi(y^n_i,\,y^n_j,\,y^n_k) = \varphi(y^n_i-y^n_1,\, 
 y^n_j-y^n_1,\,y^n_k-y^n_1) = \varphi(z^n_i,\,z^n_j,\,z^n_k)$, a set of points $(z^n_1,\,\ldots,\,z^n_m)$ has the following properties: for every different $i,\,j,\,k$ the inequalities hold $\varphi(z^n_i,\,z^n_j,\,z^n_k)\leqslant t_n$; $\min_{i\neq j}|z^n_iz^n_j| = |z^n_1z^n_2| = 1$, and, finally, $z^n_1 = 0$.
 
For each $k$ distinct from $1$ and $2$ the inequality holds $\varphi(z^n_k,\,z^n_2,\,z^n_1)\leqslant t_n$, therefore, $\left|\frac{|z^n_kz^n_1|}{|z^n_2z^n_1|}-1\right| = \Bigl||z^n_1z^n_k|-1\Bigr|\leqslant t_n$ from which it follows that $|z^n_1z^n_k|\leqslant t_n+1$. Hence, for every $n$ all the points $z^n_k$ belong to the ball $B_{t_n+1}(0)$. Besides, $\min_{i\neq j}|z^n_iz^n_j| = 1$. Then by definition of $R_m$ for every $n$ the inequality holds $R_m \leqslant c_m+1+\frac{1}{n}$. Therefore, $R_m\leqslant c_m+1$.

Let us prove the second inequality. Consider an arbitrary $1$-separated set $a_1,\,\ldots,\,a_m$ in $V$. By definition of the metric imbalance there exist such different indices $i,\,j,\,k$ that $\Bigl|\frac{|a_ia_k|}{|a_ja_k|}-1\Bigr|\geqslant c_m$. Then $$|a_ia_k|\geqslant (c_m-1)|a_ja_k| \geqslant c_m-1,$$
which implies that it is impossible for a $1$-separated set of $m$ points to lie in a ball of a radius less than $\frac{c_m-1}{2}$. Hence, $R_m \geqslant \frac{c_m-1}{2}$ which is equivalent to $2R_m +1\geqslant c_m$.

\end{proof}

\begin{corollary}\label{theorem: bigsetsinfinitebehaviour}
If $V$ is a finite-dimensional normed space then $\lim_{m\to\infty} c_m(V) = \infty$.
\end{corollary}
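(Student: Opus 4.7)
The plan is to combine Theorem \ref{theorem: nicequantity}, which already links $c_m(V)$ with $R_m(V)$, to a simple volume-packing estimate in the finite-dimensional space $V$.

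By Theorem \ref{theorem: nicequantity} we have $c_m(V)\geqslant R_m(V)-2$ for every $m$, so it suffices to prove that $R_m(V)\to\infty$ as $m\to\infty$. Set $n = \dim V$. Fix any linear isomorphism $V\cong\mathbb{R}^n$ and let $\vol$ denote the pullback of the Lebesgue measure. Since translations are measure-preserving and homotheties by $\lambda>0$ scale volumes by $\lambda^n$, every open ball satisfies $\vol\bigl(U_\rho(x)\bigr) = \rho^n\,\vol\bigl(U_1(0)\bigr)$.

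Now suppose $\{x_1,\ldots,x_m\}\subset B_r(0)$ is a $1$-separated set. The triangle inequality implies that the open balls $U_{1/2}(x_i)$ are pairwise disjoint: if $y\in U_{1/2}(x_i)\cap U_{1/2}(x_j)$ then $\|x_i-x_j\|<1$, a contradiction. They are also all contained in $U_{r+1/2}(0)$. Comparing volumes gives
$$m\cdot(1/2)^n\,\vol\bigl(U_1(0)\bigr)\;\leqslant\;(r+1/2)^n\,\vol\bigl(U_1(0)\bigr),$$
hence $r\geqslant\tfrac{1}{2}(m^{1/n}-1)$. Taking the infimum over admissible $r$ yields $R_m(V)\geqslant\tfrac{1}{2}(m^{1/n}-1)$, which tends to $\infty$ with $m$. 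Therefore $c_m(V)\geqslant\tfrac{1}{2}(m^{1/n}-1)-2\to\infty$.

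There is essentially no real obstacle here: the heavy lifting was done in Theorem \ref{theorem: first_ineq_normed} and Theorem \ref{theorem: nicequantity}, and the packing lemma for normed balls is standard. The only thing one has to be slightly careful about is that the constant $\vol(U_1(0))$ is positive and finite, which is immediate because $U_1(0)$ is a bounded open set containing a neighborhood of $0$ in the Euclidean sense (equivalence of norms on $\mathbb{R}^n$); this constant then cancels from both sides of the volume inequality.
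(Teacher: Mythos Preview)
Your proof is correct and follows the same overall route as the paper: both invoke Theorem~\ref{theorem: nicequantity} to reduce the claim to $R_m(V)\to\infty$. The only difference is in how that limit is established. The paper argues by contradiction via compactness (a bounded $R_m$ would allow arbitrarily large $1$-separated sets in a fixed closed ball), while you give an explicit volume-packing bound $R_m(V)\geqslant\tfrac{1}{2}(m^{1/n}-1)$. Your version is slightly longer but has the advantage of being quantitative, yielding in particular $c_m(V)\geqslant\tfrac{1}{2}m^{1/n}-\tfrac{5}{2}$.
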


\begin{proof}
Inequalities \ref{theorem: nicequantity} imply that it suffices to prove that $\lim_{m\to\infty}R_m = \infty$. Suppose that is not true. From the definition of $R_m$ we conclude that it is a non-decreasing function of $m$. Then it follows from our assumption that there exists a constant $C$ such that an inequality $C\geqslant R_m$ holds for all $m$. But this inequality would mean that it is possible to find an arbitrarily large $1$-separated set in $B_C(0)$ which contradicts its compactness.
\end{proof}

\begin{theorem} \label{theorem: second_ineq}
Let $Y$ be a finite-dimensional normed space with $\ed(Y) = n$. Also let $X$ be an arbitrary metric space with an equilateral subset $\{x_1,\,\ldots,\,x_m\}$ of diameter $d$, where $m > n$. Denote $c_m := c_m(Y)$. Then $$\dist_{GH}(X,\,Y) \geqslant \frac{1}{2}\min\Bigl\{\frac{d}{2},\,\frac{dc_m}{2+c_m}\Big\} > 0.$$
\end{theorem}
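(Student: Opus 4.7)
The plan is to apply Claim~\ref{claim: distGHformula}: $2\dist_{GH}(X,Y) = \inf\{\dis R : R \in \mathcal{R}(X,Y)\}$. I fix an arbitrary correspondence $R$, write $\delta := \dis R$, and for each $i \in \{1,\ldots,m\}$ choose some $y_i \in Y$ with $(x_i, y_i) \in R$. The goal is to establish $\delta \geq \min\{d/2,\, dc_m/(2+c_m)\}$; the theorem will then follow by taking the infimum over $R$ and dividing by $2$, with strict positivity coming from $d > 0$ (since $m \geq 2$) and $c_m > 0$ by Theorem~\ref{theorem: first_ineq_normed} (since $m > n = \ed(Y)$).

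The argument splits on whether $\delta \geq d/2$ or $\delta < d/2$, the first case being immediate. In the second case I first observe that $y_1, \ldots, y_m$ must be pairwise distinct: if $y_i = y_j$ for some $i \neq j$, then $(x_i, y_i), (x_j, y_j) \in R$ would force $\delta \geq \bigl||x_ix_j| - |y_iy_j|\bigr| = d > d/2$, a contradiction. Hence $(y_1, \ldots, y_m)$ is a tuple of $m > \ed(Y)$ pairwise distinct points of $Y$.

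Next I invoke the definition of $c_m$ applied to this tuple. Since there are only finitely many index triples, the maximum of $\bigl|\,|y_iy_k|/|y_jy_k| - 1\bigr|$ over distinct $(i,j,k)$ is attained and must be at least $c_m$ --- otherwise some strictly smaller value would be an admissible $c$ in the defining supremum for which $(y_1,\ldots,y_m)$ is a counterexample. Pick such a triple, and after swapping $i$ and $j$ if necessary, I may assume $|y_iy_k| \geq (1+c_m)|y_jy_k| > 0$. Applying the distortion bound to the pairs $(x_i, y_i),(x_k, y_k)$ and $(x_j, y_j),(x_k, y_k)$, whose $x$-distances are both $d$, gives $|y_iy_k| \leq d + \delta$ and $|y_jy_k| \geq d - \delta > 0$ (using $\delta < d/2$). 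Chaining, $(1+c_m)(d-\delta) \leq d+\delta$, which rearranges to $\delta \geq dc_m/(2+c_m)$.

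The main technical step is the ratio-to-absolute-difference conversion in the third paragraph; the only other point needing care is the transfer from the supremum definition of $c_m$ to a triple that actually attains the bound on our specific tuple, and this is harmless because the relevant maximum is over finitely many index triples and is therefore realized.
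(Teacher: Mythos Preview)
Your proof is correct and follows essentially the same route as the paper: fix a correspondence, split on whether its distortion is below $d/2$, in that case observe the $y_i$ are distinct, invoke the metric imbalance $c_m$ to find a triple with large ratio, and convert the ratio bound into the distortion bound $\delta \geq dc_m/(2+c_m)$ via the two inequalities $|y_iy_k|\leq d+\delta$ and $|y_jy_k|\geq d-\delta$. The only cosmetic difference is that the paper writes the two-sided estimate $\frac{-2t}{d+t}\leq \frac{|y_iy_k|}{|y_jy_k|}-1\leq \frac{2t}{d-t}$ and reads off $\frac{2t}{d-t}\geq c_m$, whereas you use a swap to force $|y_iy_k|\geq(1+c_m)|y_jy_k|$; these are equivalent manipulations.
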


\begin{proof}
Let $R$ be an arbitrary correspondence between $X,\,Y$ with the distortion $\dis\,R = t$. At first, we consider the case $t < \frac{d}{2}$. Choose an arbitrary $y_i \in R(x_i)$. In the considered case all $y_i$ are automatically different. By the definition of distortion $\Bigl||y_iy_j| - |x_ix_j|\Bigr|\leqslant t$. Hence, the following inequalities hold $$\frac{-2t}{d+t} = \frac{|x_ix_k|-t}{|x_jx_k|+t} - 1 \leqslant \frac{|y_iy_k|}{|y_jy_k|} - 1 \leqslant \frac{|x_ix_k|+t}{|x_jx_k|-t} - 1 = \frac{2t}{d - t}.$$ According to Theorem \ref{theorem: first_ineq_normed} there exist such indices $i,\,j,\,k$ that $\displaystyle \left|\frac{|y_iy_k|}{|y_jy_k|}-1\right| \geqslant c_m$. Then $\displaystyle \frac{2t}{d - t} \geqslant c_m$, which implies $\displaystyle t\geqslant \frac{dc_m}{2+c_m}$. Thus, for an arbitrary correspondence $R\in \mathcal{R}(X,\,Y)$ its distortion is either not less than $\frac{d}{2}$, or not less than $\frac{dc_m}{2+c_m}$. It follows that $\dis\,R\geqslant \min\Bigl\{\frac{d}{2},\,\frac{dc_m}{2+c_m}\Bigr\}$. Hence, by Claim \ref{claim: distGHformula} we obtain that $\dist_{GH}(X,\,Y)\geqslant \frac{1}{2}\min\Bigl\{\frac{d}{2},\,\frac{dc_m}{2+c_m}\Big\}$. 
\end{proof}

Note that Corollary \ref{theorem: bigsetsinfinitebehaviour} implies that $\lim\limits_{m\to\infty} \frac{dc_m}{2+c_m} = d$. So for sufficiently large $m$ the estimate from Theorem \ref{theorem: second_ineq} turns into $\dist_{GH}(X,\,Y)\geqslant\frac{d}{4}.$

\begin{theorem}\label{theorem: general}
Let $X,\,Y$ be normed spaces such as $\dim(Y)<\infty$ and $\dist_{GH}(X,\,Y) < \infty$. Then $X$ is isometric to $Y$.
\end{theorem}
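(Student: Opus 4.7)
The plan is to reduce to the finite-dimensional case handled by Theorem \ref{theorem: base-iso}. First, Lemma \ref{lemma: lessthaninf} gives $\dist_{GH}(X,Y)=0$ immediately, since $X$ and $Y$ are normed spaces and $\dist_{GH}(X,Y)<\infty$. It now suffices to show that $X$ itself must be finite-dimensional; Theorem \ref{theorem: base-iso} then yields an isometry between $X$ and $Y$.

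Suppose toward contradiction that $\dim X=\infty$. By Theorem \ref{theorem: Soltan}, $n:=\ed(Y)\leqslant 2^{\dim Y}$ is finite, so I fix any integer $m>n$. Theorem \ref{theorem: infEd} supplies a finite equilateral subset $\{x_1,\ldots,x_m\}\subset X$ with some common pairwise distance (and hence diameter) $d>0$; because $X$ is a normed space, multiplying this subset by a positive scalar produces an equilateral subset of the same cardinality with any prescribed diameter. Now I apply Theorem \ref{theorem: second_ineq} to this subset of $X$ against the finite-dimensional normed space $Y$ (whose equilateral dimension $n$ satisfies $m>n$), obtaining
$$\dist_{GH}(X,Y)\;\geqslant\;\frac{1}{2}\min\Bigl\{\frac{d}{2},\,\frac{d\,c_m(Y)}{2+c_m(Y)}\Bigr\}\;>\;0,$$
since $c_m(Y)>0$ by Theorem \ref{theorem: first_ineq_normed}. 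This contradicts $\dist_{GH}(X,Y)=0$. Hence $\dim X<\infty$, and Theorem \ref{theorem: base-iso} completes the argument.

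Essentially no work is required beyond this assembly: Lemma \ref{lemma: lessthaninf} collapses the distance to zero, Theorem \ref{theorem: infEd} delivers arbitrarily large equilateral sets in any infinite-dimensional normed space, Theorem \ref{theorem: first_ineq_normed} guarantees strict positivity of the metric imbalance $c_m(Y)$ once $m$ exceeds $\ed(Y)$, and Theorem \ref{theorem: second_ineq} packages these into a Gromov--Hausdorff lower bound. The only conceptual step — and therefore the only mild obstacle — is noticing that the equilateral-dimension dichotomy developed in Section~4 is precisely the tool that excludes $\dim X=\infty$; once this is seen, the proof is a direct application of previously established results.
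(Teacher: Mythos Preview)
Your proof is correct and follows essentially the same route as the paper: reduce to showing $\dim X<\infty$ and then invoke Theorem \ref{theorem: base-iso}, using Theorems \ref{theorem: Soltan}, \ref{theorem: infEd}, and \ref{theorem: second_ineq} to derive a contradiction from $\dim X=\infty$. The only cosmetic difference is that the paper scales the equilateral set to make the lower bound in Theorem \ref{theorem: second_ineq} blow up (contradicting $\dist_{GH}(X,Y)<\infty$ directly), whereas you first collapse to $\dist_{GH}(X,Y)=0$ via Lemma \ref{lemma: lessthaninf} and then contradict that with a single positive lower bound.
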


\begin{proof}
According to Theorem \ref{theorem: base-iso}, it suffices to prove that $X$ is finite-dimensional. Suppose $\dim(X) = \infty$. Theorem \ref{theorem: Soltan} implies that the equilateral dimension of $Y$ is finite. Denote $\ed(Y) = n$. By Theorem \ref{theorem: infEd} there exists an equilateral set $x_1,\,\ldots,\,x_m\in X$ of $m>n$ points. Denote its diameter by $d$. Then for an arbitrary $\lambda > 0$ points $\lambda x_1,\,\ldots,\,\lambda x_m$ form an equilateral set in $X$ of diameter $\lambda d$. Hence, there are equilateral sets of $m > n$ points and sufficiently large diameters in $X$. Now Theorem  \ref{theorem: second_ineq} implies that $\dist_{GH}(X,\,Y) = \infty$ --- contradiction. 
\end{proof}


\end{document}